\theoremstyle{plain}
\newtheorem{theorem}{Theorem}[section]
\newtheorem{lemma}[theorem]{Lemma}
\newtheorem{proposition}[theorem]{Proposition}
\theoremstyle{definition}
\newtheorem{definition}[theorem]{Definition}
\theoremstyle{definition}
\DeclareMathOperator{\Q}{\mathbb Q}
\DeclareMathOperator{\R}{\mathbb R}
\DeclareMathOperator{\vcsp}{VCSP}
\DeclareMathOperator{\dom}{dom}
\DeclareMathOperator{\ar}{ar}
\DeclareMathOperator{\QQ}{\mathbb Q\cup\{+\infty\}}
\DeclareMathOperator{\RR}{\mathbb R\cup\{+\infty\}}
\DeclareMathOperator{\val}{val}
\DeclareMathOperator{\attr}{attr}
\DeclareMathOperator{\obj}{obj}
\begin{document}

\title{Piecewise Linear Valued Constraint Satisfaction Problems with Fixed Number of Variables}  \date{}

\author{Manuel Bodirsky \thanks{The authors have received funding from the European Research Council (ERC) under the European	Union's Horizon 2020 research and innovation programme (grant agreement No 681988, CSP-Infinity).}\\{\small  Institut f\"{u}r Algebra}\\{\small Technische Universit\"{a}t  Dresden} \and Marcello Mamino \footnotemark[1]\\{\small Dipartimento di Matematica}\\ {\small Universit\`{a} di Pisa} \and Caterina Viola\footnotemark[1] \thanks{This author has been supported by DFG Graduiertenkolleg 1763 (QuantLA).}\\{\small Institut f\"{u}r Algebra}\\ {\small Technische Universit\"{a}t  Dresden}}
\maketitle
\abstract{Many combinatorial optimisation problems can be modelled as \emph{valued constraint satisfaction problems}. In this paper, we present a polynomial-time algorithm solving the valued constraint satisfaction problem for a fixed number of variables and for piecewise linear cost functions. Our algorithm finds  the infimum of a  piecewise linear function and decides whether it is a proper minimum.}

\section{Introduction}

 The input of a \emph{valued constraint satisfaction problem}, or \emph{VCSP} for short,  is a finite set of cost functions depending on a given finite set of variables, and the computational task is to find an assignment of values for the variables that minimises  the sum of the cost functions. Many computational optimisation problems arising in industry, business, manufacturing, and science, can be modelled as a VCSP. %Identifying restricted  classes of VCSPs that are solvable in polynomial time is a  theoretically interesting problem, which also finds practical applications in the design of constraint programming languages and efficient constraint solvers.

 VCSPs have been extensively studied in  the case of cost functions defined on a fixed finite set (the \emph{domain}).
The computational complexity of solving VCSPs depends on the set of allowed cost functions and has recently been classified if the domain is finite  \cite{KolmogorovThapperZivny,GenVCSP15,KozikOchremiak15,ZhukFVConjecture,BulatovFVConjecture}: every finite domain VCSP is either  polynomial-time tractable or it is NP-complete.

However, many outstanding combinatorial optimisation problems can be formulated as VCSPs only by allowing cost functions defined on infinite domains, e.g., the  set ${\mathbb Q}$ of rational numbers. 

Despite the interest in concrete VCSPs over the set  of rational numbers and over other infinite numeric domains (e.g., the integers, the reals, or the complex numbers), VCSPs over infinite domains have not yet been investigated systematically. 
The class of VCSPs for all sets of cost functions defined on arbitrary infinite domains is too large to allow  general complexity results. Indeed, every computational problem is polynomial-time Turing-equivalent to the VCSP for a suitable set of cost functions over an infinite domain~\cite{BodirskyGrohe}.
Therefore, we need to restrict the class of cost functions that we focus on.
One restriction that captures a great variety 
of theoretically and practical interesting optimisation problems is the class of all \emph{piecewise linear} (\emph{PL}) cost functions over ${\mathbb Q}$, i.e., $\Q$-valued\footnote{In the PL setting the domains $\Q$ and $\R$ are interchangeable; we only require the coefficients of the cost functions to be rational as we need to manipulate them computationally.} partial functions, whose graph is the union of linear half spaces.
In general, the VCSP for PL cost functions is NP-complete. Indeed,  the containment in NP follows from the fact that  the VCSP for the class of \emph{all} PL cost functions is equivalent to the existential theory of $(\Q;\leq,+,1)$, which is in NP (see \cite{Bodirsky2017ConstraintSP}). The NP-hardness follows from the fact that there exist NP-complete problems, e.g., the \textsc{Minimum Correlation Clustering}, and  the \textsc{Minimum Feedback Arc Set} problem (see \cite{Karp1972, GareyJohnson}), which can be formulated as instances of the VCSP for PL cost functions. 

In this paper, we prove that the restriction of the VCSP for \emph{all} PL cost functions to instances with a fixed number of variables is polynomial-time tractable.   
The restriction to a fixed number of variables has been studied for several problems in computational optimisation, and usually this kind of restriction has led  to an improvement in the computational complexity: two remarkable examples of this situation are the combinatorial polynomial-time algorithm of Megiddo \cite{Megiddo:1984:LPL:2422.322418} to solve the restriction  to a fixed number of variables of \textsc{Linear Programming} (in its full generality, Linear Programming can be solved in polynomial-time (see, e.g., \cite{Khachiyan, Karmarkar1984, Wright05theinterior-point}), but all the known algorithms rely on approximation procedures); and the algorithm of Lenstra \cite{Lenstra} to solve the restriction of Integer Programming Feasibility   (which is an NP-complete problem in its full generality) to a fixed number of variables.

\section{Preliminaries}

We adopt the following notation: $\Q$ denotes the set of rational numbers, and
 $x_i$ denotes the $i$-th component of a tuple $x$. %We use two symbols $-\infty$, and $+\infty$ such that for every rational number $c \in \Q$ it holds that $-\infty<c<+\infty$.
 We start with some preliminaries on the cost functions that we want to take into account. 
 \begin{definition}
	A \emph{cost function  over $\Q$} is a function $f \colon \Q^{n} \to {\mathbb Q} \cup \{+\infty\}$, for a positive integer $n$.
Here, $+\infty$ is an extra element with the expected properties that for all $c \in {\mathbb Q} \cup \{+\infty\}$
\begin{align*}
(+\infty) + c & = c + (+\infty) = +\infty \\
\text{  and } c & < +\infty \text{ iff } c \in {\mathbb Q}.
\end{align*} 
A cost function $f \colon \Q^{n} \to {\mathbb Q} \cup \{+\infty\}$ can  also be seen as  a partial function such  that $f$ is not defined on $x\in \Q^n$ if, and only if, $f(x)=+\infty$. 
\end{definition}

\begin{definition}\label{def:polyhedral}
	A set $C \subseteq \Q^d$ is a  \emph{polyhedral set} if it is the intersection on finitely many (open or closed) half spaces, i.e., it can be specified by a conjunction of finitely many linear constraints, i.e., for some $r \in \mathbb N$ there exist linear functions $f_i\colon \Q^{d} \to \Q$, for $1\leq i\leq r$, such that
	\[C=\left\{x \in \Q^d \mid \bigwedge_{i=1}^p(f_i(x)\leq 0) \wedge \bigwedge_{i=p+1}^q (f_i(x)< 0) \wedge  \bigwedge_{i=q+1}^r(f_{i}(x)= 0) \right\}.\]
Observe that non-empty polyhedral sets in $\Q^d$ are, in particular, convex sets.
\end{definition}

A polyhedral set $C \subseteq \Q^d$ is \emph{open} if it is the intersection of finitely many open half spaces, i.e., for some $p\in \mathbb N$ there exist $f_i\colon \Q^{d} \to \Q$ linear functions,  $1\leq i\leq p$, such that
\[C=\{x \in \Q^d \mid \bigwedge_{i=1}^p(f_i(x)< 0)\}.\] Similarly, a polyhedral set $C \subseteq \Q^d$ is \emph{closed} if it is the intersection of finitely many closed half spaces, i.e., for some $q\in \mathbb N$ there exist $f_i\colon \Q^{d} \to \Q$ linear functions,  $1\leq i\leq p$, such that
\[C=\left\{x \in \Q^d \mid \bigwedge_{i=1}^p(f_i(x)\leq 0) \wedge \bigwedge_{i=p+1}^q (f_i(x)=0)\right\}.\]
A polyhedral set $C \subseteq \Q^n$ is \emph{bounded} if it is  bounded as a subset of $\Q^n$.
We remark that the infimum of a linear function in a closed and  bounded polyhedral set is a proper minimum; while the infimum of a linear function in an open or unbounded polyhedral set is  attained only if the linear function is constant.

\begin{definition}[\cite{rockafellar1998variational}, Definition 2.47] \label{rem:PLrepresentation}
	A function $f\colon \Q^d \rightarrow \QQ$ is a \emph{piecewise linear (PL)}  if its domain, $\dom(f)$, can be represented as the union of finitely many polyhedral sets, relative to each of which $f(x)$ is given by a linear expression, i.e., there exist finitely many mutually disjoint $C_1,\ldots, C_m$ polyhedral sets such that 
	$\bigcup_{i=1}^m C_i=\dom(f)\subseteq \Q^d$, and \[f(x_1,\ldots,x_d)=\begin{cases}
	a^i_0+a^i_1 x_1+\cdots+a^i_d x_d & \text{if } (x_1,\ldots,x_d) \in C_i\\
	+\infty   & \text{if } (x_1,\ldots,x_d) \in  \Q^d\setminus \dom(f)
	\end{cases}\]
	where $a^i=(a^i_0,a^i_1,\ldots,a^i_d) \in \Q^{d+1}$, for $1 \leq i \leq m$. 
	Piecewise linear functions are sometimes called \emph{semilinear} functions.
\end{definition} 
%Given a PL function $f\colon \Q^d \to \QQ$, we call \emph{regions of linearity}  the finitely many subset $S_1, \ldots, S_k \subseteq \dom(f)$ such that  \begin{itemize}	\item $\bigcup_{i=1}^k S_i= \dom(f)$,	\item $f _{|S_i}$ is a linear polynomial for every $1\leq i \leq k$, and	\item $k$ is minimal. \end{itemize}Every region of linearity is the union of finitely many polyhedral sets.

%Sometimes, given a PL cost function $f$, we will refer to the formulas defining either a region of linearity of $f$ or the complement of the domain of $f$  as the \emph{case distinctions} of $f$.

We are now ready to formally define the computational problem that we want to focus on.

\begin{definition}\label{vcspdef}
	Let $d$ be a positive integer, and let $V:=\{x_1,\ldots,x_d\}$ be a set of variables. An \emph{instance} $I$ of the \emph{valued constraint satisfaction problem (VCSP) for PL cost functions with variables in $V$} consists of
 an expression $\phi$ of the form
		\[\sum_{i=1}^{m} f_i(x^i_1,\ldots, x^i_{\ar(f_i)})\]
		where $f_1,\dots,f_m $ are finitely many PL cost functions  and all the $x^i_j$ are variables from $V$.
	The task is to find the  \emph{infimum cost} of $\phi$, defined as
	\[\inf_{\alpha \colon V \to \Q}\sum_{i=1}^{m} f_i(\alpha(x^i_1),\ldots, \alpha(x^i_{\ar(f_i)})),\]  and to decide whether it is attained, i.e., whether it is a proper minimum or not.
\end{definition}

The computational complexity of the $\vcsp$ for PL cost functions with variables from a fixed set $V$  depends on how the cost functions are represented in the input instances. 
We fix a representation of cost functions which is strictly related both to the mathematical properties of piecewise linear functions and to the algorithmic procedures and mathematical tools that we want to use to deal with them.

\begin{definition}{\textbf{Representation of PL cost functions.}}\label{def:reprsemilinear}
	We assume that a PL cost function is given by  a list of linear constraints, specifying the polyhedral sets, and a list of linear polynomials and $+\infty$ symbols,  defining the value of the function relatively to each polyhedral set. The linear constraints and the linear polynomials are encoded by the list of their rational coefficients, and $+\infty$ is represented by a special symbol. The constants for (numerators and denominators of) rational coefficients for linear constraints and linear polynomials are represented in binary.
\end{definition}

%	\begin{remark}	In the PL context the sets $\Q$ and $\R$ are interchangeable (both when considered as the domain of the functions and when considered as the set of finite values of the functions). However, we always require all the coefficients to be rational as we need to manipulate them computationally. \end{remark}

%We adopt the usual definition of first-order logic. A formula is \emph{atomic} if it does not contain logical symbols (connectives or quantifiers). By convention, we have two special atomic formulas, $\top$ and~$\bot$, to denote truth and falsity.

\textsc{Linear Programming}  is  an example of a problem which can be formulated in our setting; it is also a tool that 
plays an important role later in the paper.

\begin{definition}\textsc{Linear Programming} (\emph{LP}) is an optimisation problem with a linear objective function and a set of linear constraints imposed upon a given set of underlying variables. A linear program has the form \begin{align*}
	& \text{minimise }&\sum_{j=1}^{n}c_jx_j\\
	&\text{subject to }
	& \sum_{j=1}^{n}a_{j}^ix_j\leq b^i, \quad & \text{for } i \in \{1,\ldots,m\}. 
	\end{align*}\end{definition}
	This problem has $n$ variables $x_j$ (ranging over the rationals or over the real numbers) and $m$ linear inequalities constraints. The coefficients $c_j$, $a_{j}^i$, and $b^i$ are rational numbers, %represented in binary
	for all $j \in \{1,\ldots,n\}$, and all $i \in \{1,\ldots, m\}$. The linear constraints, $\sum_{j=1}^{n}a_{j}^ix_j\leq b^i$, specify a polyhedral set, namely the \emph{feasibility polytope} over which the objective function has to be optimised. 
	
	An algorithm solving LP  either finds a point in the feasibility polytope where the objective function has the smallest value if such a point exists, or it reports that the instance is infeasible (in this case we assume that the output of the algorithm is $+\infty$), or it reports that the infimum of the objective function is $- \infty$ (in this case we assume that the output of the algorithm is $-\infty$).
	
	The \emph{\textsc{Linear Program Feasibility} problem}, \emph{LPF},  is a decision problem having the form of a standard linear program but without any objective function to minimise. 
	The output of an algorithm solving LPF is ``no" or ``yes", respectively, depending on whether the  polyhedral set defined by the linear constraints is  empty or not. 
	Both LP and LPF can be solved in polynomial time (see, e.g., \cite{Khachiyan, Karmarkar1984, Wright05theinterior-point}). 

In the remainder of the paper, we use the fact that \textsc{Linear Program Feasibility} for a set of linear constraints containing also strict inequalities can be solved in polynomial time. Given a set of linear constraints $l$ and a linear expression $\obj$, we denote by $LPF(l)$ the LPF instance defined by the linear constraints in $l$, and we denote by $LP(l,\obj)$ the LP instance defined by the linear constraints in $l$ and by the objective function $\obj$.
			
We remark that in  LP and LPF the feasibility polytope is defined by weak linear inequalities, i.e., by linear constraints of the form $\sum_{j=1}^{n}a_{j}x_j\leq b$. The feasibility of a set of linear constraints containing also strict linear inequalities (i.e., of the form $\sum_{j=1}^{n}a_{j}x_j< b$) can be solved by solving a linear number of linear programs, as shown in \cite{JonssonBaeckstroem}, where the authors give a polynomial-time algorithm deciding the feasibility of a set of \emph{Horn disjunctive linear constraints}. However, the feasibility of a set of linear constraints containing strict and weak linear inequalities can be decided by solving only one LP instance.

	\begin{lemma}[Motzkin Transposition Theorem \cite{MotzkinTransp,Roos2009}]\label{lemma:motzkintranspth}
		Let $A \in \Q^{k_1\times d}$, and $B \in \Q^{k_2\times d}$ be matrices such that $\max(k_1,d)\geq 1$. The system  \begin{equation*}\begin{cases}Ax<0\\
		Bx\leq0
		\end{cases}\end{equation*}
		has a solution $x \in \Q^d$ if, and only, if the system \begin{equation*}\begin{cases}A^{T}y+B^{T}z=0\\
		y\geq 0,\; z \geq 0
		\end{cases}\end{equation*} 
		does not admit a solution $(y,z)\in \Q^{k_1+k_2}$ such that $y \neq (0,\ldots,0)$.
	\end{lemma}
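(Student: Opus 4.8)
The plan is to prove the logically equivalent statement that the mixed system $Ax<0,\,Bx\le 0$ is \emph{infeasible} if and only if the transposed system has a solution $(y,z)\ge 0$ with $y\neq 0$; negating both sides then recovers the theorem. The degenerate case $k_1=0$ is immediate, since then the primal has the solution $x=0$ while the (empty) vector $y$ can never be nonzero, so I assume $k_1\ge 1$. For the \emph{easy} direction I show that the two systems cannot be solved simultaneously: given $x$ with $Ax<0,\,Bx\le 0$ and $(y,z)$ with $A^Ty+B^Tz=0$, $y,z\ge 0$, $y\neq 0$, pairing gives $0=(A^Ty+B^Tz)^Tx=y^T(Ax)+z^T(Bx)$; but $y^T(Ax)<0$ because $y\ge 0$, $y\neq 0$ and every entry of $Ax$ is strictly negative, while $z^T(Bx)\le 0$, so the right-hand side is strictly negative --- a contradiction.

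For the substantive converse I exploit the homogeneity of the primal to trade the strict inequalities for an inhomogeneous weak system. Scaling any solution $x$ by the positive factor $-1/\max_i(Ax)_i$ yields a point satisfying $Ax\le -\mathbf 1$ and $Bx\le 0$, and conversely every solution of this weak system solves the strict one; hence $Ax<0,\,Bx\le 0$ is infeasible exactly when $\binom{A}{B}x\le\binom{-\mathbf 1}{0}$ is infeasible. Applying the affine form of Farkas' Lemma --- the system $Mx\le b$ is infeasible iff some $w\ge 0$ satisfies $M^Tw=0$ and $b^Tw<0$ --- to $M=\binom{A}{B}$ and $b=\binom{-\mathbf 1}{0}$, and writing $w=(y,z)$ blockwise, the condition $M^Tw=0$ reads $A^Ty+B^Tz=0$ while $b^Tw<0$ reads $-\sum_i y_i<0$, i.e.\ $y\neq 0$ (using $y\ge 0$). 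This is exactly the transposed system, which closes the equivalence.

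The genuine content sits in the hard direction of Farkas' Lemma; the scaling step is merely the device that lets the standard weak-inequality form of Farkas absorb the strict rows of the primal. To keep the argument self-contained one can derive this form of Farkas by Fourier--Motzkin elimination on the $d$ variables of $Mx\le b$, carrying along the nonnegative multipliers so that the contradictory constant inequality reached upon eliminating the last variable exhibits the certificate $w$; alternatively one may invoke strong linear-programming duality, which is already available in the present setting. I expect the multiplier bookkeeping through successive eliminations, together with the correct treatment of the degenerate dimensions permitted by the hypothesis $\max(k_1,d)\ge 1$, to be the only delicate points.
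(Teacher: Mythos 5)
Your argument is correct, but there is nothing in the paper to compare it against: the paper states this lemma as an imported classical result, citing Motzkin and Roos, and gives no proof of its own. Your two directions are both sound. The ``easy'' direction ($0=y^{T}(Ax)+z^{T}(Bx)$ with $y^{T}(Ax)<0$ and $z^{T}(Bx)\le 0$) is the standard pairing argument; the converse, via rescaling a solution of $Ax<0,\ Bx\le 0$ by $-1/\max_i(Ax)_i$ so that the strict block becomes $Ax\le-\mathbf{1}$ and then invoking the affine (Gale) form of Farkas' Lemma, is a legitimate and standard reduction --- essentially the derivation one finds in Roos's note. Two small points you correctly flag but should actually carry out if this were to be written in full: (i) the affine Farkas lemma must be available over $\Q$ (it is, since Fourier--Motzkin elimination and LP duality work over any ordered field, and the paper anyway assumes polynomial-time LP over the rationals); and (ii) the degenerate cases $k_1=0$ and $d=0$ need the explicit one-line checks you sketch, since the scaling step divides by $\max_i(Ax)_i$, which only makes sense for $k_1\ge 1$. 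With those details filled in, the proposal is a complete and correct proof of the lemma; it neither agrees nor disagrees with a ``paper approach,'' because the paper delegates the proof entirely to the cited references.
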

	
	\begin{proposition}
		The \textsc{Linear Program Feasibility}  problem  (LPF) for a finite set of strict or weak linear inequalities is polynomial-time many-one reducible to LP and, therefore, it can be solved in polynomial time.
	\end{proposition}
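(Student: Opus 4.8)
The plan is to strip the strictness out of the system by a homogenization trick and then read off feasibility from the Motzkin dual, which will be an ordinary (weak) LPF instance solvable by a single LP. Write the input as $a_i\cdot x<b_i$ for $i\in S$ and $a_j\cdot x\le b_j$ for $j\in W$, where $S$ and $W$ partition the index set of the $m$ constraints. First I would homogenize: introduce a fresh variable $x_0$ and pass to the homogeneous system $a_i\cdot x-b_i x_0<0$ ($i\in S$), $a_j\cdot x-b_j x_0\le 0$ ($j\in W$), together with the extra strict constraint $-x_0<0$. A one-line argument shows the original system is feasible in $\Q^d$ if and only if this homogeneous system has a solution $(x,x_0)\in\Q^{d+1}$: from a solution one has $x_0>0$, so $x/x_0$ solves the original instance, and conversely $(\hat x,1)$ works for any original solution $\hat x$. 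This is exactly the shape $A'y<0,\ B'y\le 0$ required by Lemma~\ref{lemma:motzkintranspth}, with $y=(x,x_0)$, the rows of $A'$ collecting the $S$-constraints and the row $-x_0<0$, and the rows of $B'$ collecting the $W$-constraints; note the hypothesis $\max(k_1,d)\ge1$ holds since the ambient dimension is now $d+1\ge 1$.

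By the Motzkin Transposition Theorem, the homogeneous system is \emph{infeasible} precisely when the dual cone $K=\{(y,z)\mid A'^{T}y+B'^{T}z=0,\ y\ge0,\ z\ge0\}$ contains a point with $y\ne(0,\dots,0)$. The key step is to replace the open condition $y\ne 0$ by weak constraints: since $K$ is a cone (closed under positive scaling), it contains a point with $y\ne0$ if and only if it contains one with $\mathbf 1^{T}y=1$, because $y\ge 0$ and $y\ne0$ force $\mathbf 1^{T}y>0$, after which we rescale. Hence the original instance is feasible if and only if the system
\[
A'^{T}y+B'^{T}z=0,\quad \mathbf 1^{T}y=1,\quad y\ge 0,\quad z\ge 0
\]
is infeasible. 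This system uses only weak inequalities and equalities, so it is an ordinary LPF instance $l'$, computable from the input by transposition and the addition of one normalizing equation, hence of size polynomial in the binary encoding of the coefficients. Using the paper's convention that $\mathrm{LP}$ returns $+\infty$ on infeasible instances, the original mixed instance is a yes-instance if and only if $\mathrm{LP}(l',\obj)=+\infty$ for the zero objective $\obj$; this is the desired polynomial-time reduction to LP, and polynomial-time tractability follows from the polynomial-time solvability of LP.

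The only genuinely nontrivial point — and thus the main obstacle — is the normalization from $y\ne0$ to $\mathbf 1^{T}y=1$: one must verify that rescaling a nonzero dual certificate keeps it inside $K$, which is exactly where the homogeneity of the dual system is used, and that no feasible point is lost in either direction. The homogenization equivalence and the verification that $l'$ has polynomial size are routine bookkeeping. I would close by remarking that the argument involves a single complementation (feasible versus infeasible), which is harmless for the stated complexity conclusion and is absorbed into reading off the LP output as above.
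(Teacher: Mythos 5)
Your proof is correct and follows essentially the same route as the paper: homogenise with a fresh variable and the extra strict constraint $-x_0<0$, then apply the Motzkin Transposition Theorem (Lemma~\ref{lemma:motzkintranspth}) to reduce the mixed strict/weak system to a question about the dual cone. The only difference is the last step: where you normalise the dual certificate by adding $\mathbf{1}^{T}y=1$ and test feasibility of the resulting purely weak system, the paper instead minimises $-\sum_j y_j$ over the (always nonempty) dual cone and distinguishes optimum $0$ from $-\infty$; both are standard and equally valid ways of encoding the condition $y\neq 0$ in a single LP call.
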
	
	
	\begin{proof}
		Let us assume that the linear constraints in the input consist of $k_1$ strict inequalities, and $k_2$ weak inequalities, i.e., we have to check the satisfiability of the following system  \begin{equation}\label{eq:LPF1}\begin{cases}\sum_{i=1}^da_{j,i}x_i+a_{j,d+1}<0 &\text{ for } 1\leq j \leq k_1\\ \sum_{i=1}^db_{j,i}x_i+b_{j,d+1}\leq0 &\text{ for } 1\leq j \leq k_2,\\
		%\sum_{i=1}^dc_{j,i}x_i+c_{j,d+1}=0, &\text{ for } 1\leq j \leq k_3,\\
		\end{cases}\end{equation}
		Let us first observe that the system (\ref{eq:LPF1}) is equivalent to the following one
		\begin{equation}\label{eq:LPF2}\begin{cases}\sum_{i=1}^{d+1}a_{j,i}t_i<0 &\text{ for } 1\leq j \leq k_1\\
		-t_{d+1}<0\\
		\sum_{i=1}^{d+1}b_{j,i}t_i\leq0 &\text{ for } 1\leq j \leq k_2.\\
		%\sum_{i=1}^{d+1}c_{j,i}t_i=0, &\text{ for } 1\leq j \leq k_3.
		\end{cases}\end{equation}
		Indeed, if $(t_1,\ldots,t_{d},t_{d+1})$ is a solution for (\ref{eq:LPF2}), then $(x_1,\ldots,x_d)$ with $x_i:=\frac{t_i}{t_{d+1}}$ is a solution for (\ref{eq:LPF1}); vice versa if $(x_1,\ldots,x_d)$ is a solution  for  (\ref{eq:LPF1}), then $(x_1,\ldots,x_{d},1)$ is a solution for (\ref{eq:LPF2}). Let us consider the following linear program \begin{align}\nonumber
			 &\text{minimise }&\sum_{j=1}^{k_1+1}(-y_j)\\\nonumber
			&\text{subject to }
			& A^{T}y+B^{T}z %+C^{T}w=0
			\\\label{eq:LPF4}
			&&		-y\leq 0\\\nonumber&&-z\leq 0,
			\end{align}
			 with variables $y_1,\ldots,y_{k_1+1},z_1,\ldots,z_{k_2}, $%w_1,\ldots,w_{k_3}$,
		where $A\in \Q^{(k_1+1)\times(d+1)}$ is the matrix such that $(A)_{ji}=a_{ji}$ for $1\leq j \leq k_1$ and $1\leq i \leq d+1$, and such that the $(k_1+1)$-th row of $A$ is $(0, \ldots, 0, -1)$; and the matrix $B \in \Q^{(k_2)\times(d+1)}$ is such that $(B)_{ji}=b_{ji}$. %; and the matrix $C\in \Q^{(k_3)\times(d+1)}$ is such that $(C)_{ji}=c_{ji}$.
		 
		Observe that the linear program (\ref{eq:LPF4}) can be computed in polynomial time (in the size of the input).
		By Lemma \ref{lemma:motzkintranspth}, the system  (\ref{eq:LPF2}) is satisfiable if, and only if, the  feasibility polytope determined by the linear constraints in (\ref{eq:LPF4}) does not admit a solution $(y,z)\in \Q^{(k_1+1)+k_2}$ such that $y \neq (0,\ldots,0)$.  If the output of the algorithm for LP on instance~(\ref{eq:LPF4}) is $+\infty$ or a tuple having $0$ in the first $k_1+1$ coordinates, then  the system (\ref{eq:LPF1}) is satisfiable, and therefore we accept.
		Otherwise, if the output is $-\infty$ or a tuple $(y,z)\in \Q^{(k_1+1)+k_2}$ such that $y \neq (0,\ldots,0)$, then the system (\ref{eq:LPF1}) is not satisfiable and we reject.
\end{proof}

\section[PL VCSPs with Fixed Number of Variables]{PL VCSPs with Fixed Number of Variables}\label{sec:1}
 We exhibit a polynomial-time algorithm that solves the VCSP for PL cost functions  having variables from a fixed finite  set. 
 We assume that the input  VCSP instance is given as a sum of PL cost functions represented as in Definition \ref{def:reprsemilinear}.
Our algorithm computes the infimum of the objective function, and specifies whether it is attained, i.e., whether it is a proper minimum.

The following theorem uses an idea that appeared in  \cite{Bodirsky2017ConstraintSP}, Observation 17.

\begin{theorem}
	Let $V$ be a  finite set of variables. Then  there is a polynomial-time  algorithm that solves  the $\vcsp$ for PL cost functions having variables in $V$. \end{theorem}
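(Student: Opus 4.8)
The plan is to merge the summands into a single piecewise linear objective $\phi$ on $\Q^d$, where $d=|V|$, and then to exploit the fact that $d$ is \emph{fixed} in order to reduce the optimisation to polynomially many linear programs. First I would read off, from the representations of $f_1,\dots,f_m$, the finite list of all linear forms appearing in the constraints that delimit the polyhedral pieces; after substituting the actual variables of $V$ for the formal arguments of each $f_i$, these become linear forms $h_1,\dots,h_N$ on $\Q^d$, with $N$ bounded by the total input size. Their zero sets are hyperplanes in $\Q^d$, and I would consider the arrangement they induce.

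The key observation is that every relatively open cell $R$ of this arrangement --- that is, every non-empty set obtained by fixing, for each $j$, one of the sign conditions $h_j<0$, $h_j=0$, $h_j>0$ --- lies entirely inside a single polyhedral piece of each $f_i$, since $R$ crosses none of the delimiting hyperplanes. Consequently $\phi$ is either identically $+\infty$ on $R$, or equal on $R$ to a single linear form $L_R$ obtained by summing the active linear pieces of the $f_i$; in the latter case $R\subseteq\dom(\phi)$, and conversely $\dom(\phi)$ is exactly the union of those cells on which $\phi$ is finite. Because $d$ is fixed, it is a standard fact that the number of faces of all dimensions of an arrangement of $N$ hyperplanes in $\Q^d$ is $O(N^d)$, hence polynomial, and that the cells can be enumerated in polynomial time by incremental construction. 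This is the step that makes the whole procedure polynomial and where the hypothesis that $V$ is fixed enters essentially.

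For each cell $R$ with $R\subseteq\dom(\phi)$ I would then solve a linear program: minimise $L_R$ over the closure $\bar R$, obtained by replacing each strict inequality $h_j<0$ by $h_j\leq 0$ (and symmetrically for $>$), yielding a value $v_R\in\Q\cup\{-\infty\}$. Since $L_R$ is continuous and $R$ is dense in $\bar R$, this equals $\inf_{x\in R}\phi(x)$, so the infimum sought is $v^\ast=\min_R v_R$, which is $-\infty$ precisely when some $v_R$ is; in that case $v^\ast$ is not a proper minimum. When $v^\ast$ is finite, I would decide attainment by testing, for each cell $R$ with $v_R=v^\ast$, whether the system consisting of the strict and weak constraints defining the open cell $R$ together with the equation $L_R=v^\ast$ is feasible; by the Proposition on feasibility of mixed strict/weak linear systems this test runs in polynomial time, and $v^\ast$ is attained if and only if at least one such test succeeds.

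The main obstacle is precisely the cell-enumeration step: a naive pass over the $3^N$ sign patterns is exponential, so one must genuinely rely on the polynomial bound $O(N^d)$ on the number of non-empty faces of a hyperplane arrangement in fixed dimension, and on a construction realising it. Everything else --- forming $\phi$, reading off $L_R$ on each cell, and the LP and feasibility calls --- is routine and polynomial once the cells are available, using that LP and the feasibility of systems mixing strict and weak linear inequalities are polynomial-time solvable.
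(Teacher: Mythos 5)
Your proposal is correct and follows essentially the same route as the paper: decompose $\Q^d$ by the arrangement of the hyperplanes coming from the constraints of the pieces, use the polynomial bound on the number of sign-condition cells in fixed dimension together with incremental feasibility checks to enumerate the non-empty cells, read off the single linear expression of the objective on each cell, and minimise it by an LP over the cell's closure. The only (minor) divergence is the attainment test: the paper exploits that every cell is relatively open or a point, so the infimum on a cell is attained iff the objective is constant there, which it checks by comparing the infimum and supremum over the closure, whereas you test feasibility of the cell's strict and weak constraints together with the equation $L_R=v^\ast$; both tests are correct and run in polynomial time.
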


\begin{proof}

	We prove  that Algorithm \ref{alg:fixedarity} correctly solves the VCSP for PL cost functions with variables in $V$ in polynomial time.
	An input of an instance of the VCSP is a representation of an objective function $\phi$ as the sum  of a finite number of given cost functions, $f_1,\ldots, f_n$, applied to some of the variables in $V=\{x_1,\ldots,x_d\}$, that is, \[\phi(x_1,\ldots,x_d)=\sum_{i=1}^nf_i(x^i),\] where $x^i \in V^{\ar(f_i)}$ for $1\leq i \leq d$.  
	%We want to point out that even if, a priori, every cost function $f_i$ has a certain arity that does not depend on $d$, as it is applied to a tuple $x^i=(x^i_1,\ldots,x^i_{\ar(f_i)})\in V^{\ar(f_i)}$ we can see $f_i$ as a $d$-ary function.Therefore, for $1\leq i \leq d$,  
	We can assume that the cost function $f_i$ is defined  for every $x \in \Q^d$  by \[f_i(x)=\begin{cases}
	\sum_{j=1}^{d}a_{j}^{i,l} x_{j}+b^{i,l} &  \text{if } C_{i,l}(x), \text{ for some } 1\leq l \leq m_i\\
	+\infty & \text{otherwise.} 
	\end{cases}\] 
	For every $1\leq l \leq m_i$ the formulas $C_{i,l}(x)$ have the following form:  	\[C_{i,l}(x)= \bigwedge_{j=1}^p(h^{i,l}_j(x)\leq 0) \wedge \bigwedge_{j=p+1}^q (h^{i,l}_j(x)< 0) \wedge  \bigwedge_{j=q+1}^r(h^{i,l}_j(x)= 0),\] for some $p, q, r \in \mathbb N$ and  for some linear polynomials $h^{i,l}_j\colon \Q^d \to \Q$, where $1\leq j \leq r$.
We assume that the cost functions $f_i$ are represented as in Definition~\ref{def:reprsemilinear}.
	
	Algorithm~\ref{alg:fixedarity} first extracts the list of linear polynomials $p_1,\ldots,p_k$ that appear in the finite set of linear constraints defining some cost function $f_i$ %. Where the cost functions $f_i$ are those  appearing in the representation of the objective function $\phi$ as a sum of cost functions from $\g$ 
	, i.e.,
	\[\{p_1,\ldots,p_k\}:=\bigcup_{i=1}^n \bigcup_{l=1}^{m_i}\bigcup_{j} \left\{h^{i,l}_j\right\}.\] %Where the cost functions $f_i$ are those appearing in the representation as a sum of the objective function $\phi$.
	Observe that the linear polynomials $p_1,\ldots, p_k$ decompose the space $\Q^d$ into $\sigma$ polyhedral sets, where \begin{equation}\label{formula}\sigma\leq \tau_d(k)=\sum_{i=0}^{d}2^i\binom{k}{i}\end{equation} and that this bound is tight, i.e., $\sigma=\tau_d(k)$, whenever the hyperplanes defined by $p_i(x)=0$, for $1\leq i \leq k$, are in general position.% We remark that every of these polyhedral sets is a convex subspace of $\Q^d$.
	
	Inequality~(\ref{formula}) can be verified by induction on the number of hyperplanes, $k$. Clearly, for all $d \in \mathbb N$, one hyperplane divides $\Q^d$ into $3=2^0+2^1$ polyhedral sets. Suppose now that $k \geq 3$ and that Inequality~(\ref{formula}) is true for every $d$ and for at most $k-1$ hyperplanes. Suppose that the $k$ hyperplanes are in general position (we get in this way the upper bound $\tau_d(k)$).
	Observe that, by adding the hyperplanes one-by-one, the $k$-th hyperplane intersects at most $\tau_{d-1}(k-1)$ of the polyhedral sets obtained until the previous step. In fact, this number is equal to the number of polyhedral sets in which a hyperplane, that is a subspace of dimension $d-1$, is divided by $k-1$ subspaces of dimension $d-2$. 
	
	Suppose that we know how the space is decomposed into polyhedral sets by the hyperplanes $p_1(x)=0, \ldots, p_{k-1}(x)=0$. Adding $p_k(x)=0$ to the list of hyperplanes decomposing the space, each one of the polyhedral sets intersecting it is divided in three polyhedral sets (corresponding to $p_k(x)<0$, $p_k(x)=0$, and $p_k(x)>0$, respectively). Summing up, at every step we add to the ``old polyhedral sets"  (i.e., polyhedral sets obtained until the previous step) two more polyhedral sets for each of the old ones intersecting $p_k(x)=0$, then it follows that \[\tau_d(k)=\tau_d(k-1)+2\tau_{d-1}(k-1).\] Using this equality and the inductive hypothesis we obtain \begin{align*}\tau_d(k)&=2\sum_{i=0}^{d-1}2^i\binom{k-1}{i}+\sum_{i=0}^{d}2^i\binom{k-1}{i}=\sum_{i=1}^{d}2^i\bigg(\binom{k-1}{i-1}+\binom{k-1}{i}\bigg)+1\\&=\sum_{i=1}^{d}2^i\binom{k}{i}+1=\sum_{i=0}^{d}2^i\binom{k}{i}.\end{align*}
	In particular, the number  $\sigma$ of polyhedral sets is bounded by a linear polynomial in $k$, and the Algorithm~\ref{alg:fixedarity} produces a tree, that a priori has $3^k$ branches, but that actually has $O(k)$ branches. 
	
	The algorithm computes the list of all non-empty polyhedral sets by computing at most $\sum_{i=1}^{k-1}\tau_d(i)$ instances of linear program  feasibility, and then it computes the infimum of the objective function in every non-empty polyhedral set by computing at most $3\tau_d(k)$ linear programs. Observe that the only closed and bounded non-empty polyhedral sets computed by Algorithm~\ref{alg:fixedarity} are $0$-dimensional subspaces, i.e., points,  and that all the other polyhedral sets computed are open or unbounded.  Therefore, in order to check whether the infimum in a polyhedral set $C$ is a minimum, it is enough to check whether  the objective function in $C$ is constant, that is whether the infimum in $C$  is equal to the supremum in  $C$. This is done by our algorithm solving at most $3\tau_d(k)$ further linear programs. The  linear expression of the objective function in a polyhedral set can be computed by running a  number of  \textsc{Linear Program Feasibility} instances that is polynomial in the size of the input instance.
	Globally, the running time of Algorithm~\ref{alg:fixedarity} is polynomial in the size of the input. 
 
\end{proof}

\noindent\begin{minipage}{\textwidth}

\begin{algorithm}[H] \label{alg:fixedarity}
	
	\SetAlgoNoLine
	\KwIn{$\phi(x)=f_1(x)+\cdots+f_n(x)$ with $f_i(x)=f_{ij}(x)$ if $x \in C_{ij}$, and the $C_{ij}$'s  
		each given as a finite set of linear conditions, for   $1\leq j \leq n_i$, and $1\leq i \leq n$.}
	\KwOut{$(\val,\attr)$ where $\val$ is the value of the infimum of the objective function, and $\attr$ is a string which specifies whether $\val$ is attained ($\attr=\text{``}\min\text{"}$) or not ($\attr=\text{``}\inf\text{"}$). }
	$\{p_1,\ldots,p_k\}:=$  the set of all the linear functions appearing in the $C_{ij}$'s\;
	%$k:=\lvert P\rvert$ (the cardinality of $P$)\; 
	$L:=\{\{\}\}$ (the set of polyhedral sets in which the $p_i$'s divide the space)\;
	
	\For{$ i=1,...,k$
	}{
		\For{each $l$ in $L$
		}{
			$l_{-1}:=l \cup\{p_i<0\}$\;
			$l_{0}:=l\cup\{p_i=0\}$\;
			$l_{1}:=l \cup\{-p_i<0\}$\;
			\For{$j=-1,0,1$}{
			\If{LPF($l_{j})=$ yes
			}{
				$L:=(L \setminus\{l\})\cup \{l_j\}$}
		}}
	}  
	$\val:=+\infty$\;     
	$\attr:''\inf"$\;
	\For{each $l$ in $L$
	}{
	
	$l_c:=\{\}$ (the closure of $l$)\;
	\For{each $c\in l$}{\eIf{$c$ is of the form $(p<0)$}{$l_c:=l_c\cup\{p\leq 0\}$} {$l_c:=l_c \cup\{c\}$} 
	} 
	\For{$i=1,\ldots,n$ 
		}{
		$g_i:=+\infty$\;
		\For{$j=1,\ldots, n_i$ }{
			\If{LPF$(l \cup C_{ij})=$ yes
			}{
				$g_i(x):=f_{ij}(x)$
			}}
		}
		$\obj:=\sum_{i=1}^{n}g_i(x)$\;
		$m:= $LP$(l_c$, $\obj) $ (the infimum of $\phi$ in  $l$)\;
		$M:= - $LP$(l_c$, $-\obj )$ (the supremum of $\phi$ in  $l$)\;
			\If{$m<\val$}{\eIf{$m=M$}{$\attr:=\text{``}\min\text{"}$(the infimum is attained iff $\phi$ is constant in $l$)}{$\attr:=\text{``}\inf\text{"}$}} 
	}
\Return{$(\val,\attr)$}\;	
	\caption{Algorithm for PL VCSPs with a fixed number of variables}

\end{algorithm}	
\end{minipage}

%\vskip\baselineskip\noindent \begin{minipage}{\textwidth}\begin{algorithm}[H]\label{alg:LPF}\footnotetext{ LP($C$,$f$ is the linear program 
			%having the linear conditions in $C$ as linear constraints and objective function $f$. }	
		%\SetAlgoNoLine
	%	\KwIn{$\sum_{i=1}^da_{ji}x_i<a_{d+1}$, for $1\leq j \leq k_1$; 
		%	$\sum_{i=1}^db_{ji}x_i\leq b_{d+1}$, for $1\leq j \leq k_2$;
		%	$\sum_{i=1}^dc_{ji}x_i=c_{d+1}$, for $1\leq j \leq k_3$.  ((the representation of) a set of linear constraints).}
	%	\KwOut{"yes" or "no" depending on whether the polyhedral set defined by the linear constraints in the input is non-empty or not.}
	%	$C:=\{\}$ (the dual set of linear constraints)\;
	%	$i:=1$\;
	%	\While{$i<d+1$}{$a_{k_1+1,i}:=0$\;
	%		$i:=i+1$\;
	%	}
	%	$a_{k_1+1,d+1}:=-1$\;
	%	\For{$i=1,...,d+1$
	%	}{
	%		$C:=C\cup\{\sum_{i=1}^{k_1+1}a_{ji}y_j+\sum_{i=1}^{k_2}b_{ji}z_j+\sum_{i=1}^{k_3}c_{ji}w_j\leq 0\}$\;
	%		$C:=C\cup\{\sum_{i=1}^{k_1+1}(-a_{ji})y_j+\sum_{i=1}^{k_2}(-b_{ji})z_j+\sum_{i=1}^{k_3}(-c_{ji})w_j\leq 0\}$}
	%	\For{$j=1,...,k_1+1$
	%	}{
	%		$C:=C\cup\{-y_j \leq 0\}$}
	%	\For{$j=1,...,k_2$
	%	}{	$C:=C\cup\{-z_j \leq 0\}$}	
	%	$f(y):=\sum_{j=1}^{k_1+1}(-y_j)$\;
	%	LP($C,f$)\;
	%	\eIf{LP$(C,f)=+\infty$ or $f($LP$(C,f))=0$ }{\Return{"yes"}}{\Return"no"}
%		\caption{Algorithm deciding LPF also for strict linear inequalities}\end{algorithm}\end{minipage}

\section{Conclusion and Future Work}
We have provided a polynomial-time algorithm solving the VCSP for piecewise linear cost functions  having a fixed number of variables.
 In the future, we would like to continue this line of research by studying the computational complexity of the VCSP with a fixed number of variables and \emph{semialgebraic} cost functions. A function $f \colon \R^n\to \RR$ is called semialgebraic if its domain can be represented as the union of finitely many  \emph{basic semialgebraic sets} (see \cite{Bodirsky2017ConstraintSP}) of the form $\{x \in \R^n \mid \chi(x)\}$ where $\chi$ is a conjunction of (weak or strict)
polynomial inequalities with integer coefficients, relative to each of which $f(x)$ is given by a polynomial expression with integer coefficients. 

The VCSP for all semialgebraic cost function is  equivalent to the existential theory of the reals (see~\cite{Bodirsky2017ConstraintSP}),  which  is in \textsc{PSpace} (see~\cite{Canny:1988:AGC:62212.62257}). The restriction of the feasibility problem associated with a semialgebraic VCSP to a fixed number of variables  is polynomial-time tractable by  cylindrical decomposition (cf.\;\cite{CADcollins}). However,  we do not know whether with this approach can solve our optimisation problem in polynomial time.
Another contribution related with our open problem was given in~\cite{KhachiyanPorkolab2000} by Khachiyan and Porkolab, who  proved that the problem of minimising a convex polynomial objective function with integer coefficients over a fixed number of integer  variables, subject to polynomial constraints with integer coefficients that define a convex region, can be solved in polynomial time in the  size of the input.

\bibliographystyle{abbrv}
\bibliography{local}

\end{document}